\documentclass[12pt]{amsart}
\usepackage{amsmath,amssymb,amsbsy,amsfonts,amsthm,latexsym,amsopn,amstext,
                                                    amsxtra,euscript,amscd}
\usepackage[english]{babel}
\begin{document}

\newtheorem{thm}{Theorem}
\newtheorem{lem}[thm]{Lemma}
\newtheorem{claim}[thm]{Claim}
\newtheorem{cor}[thm]{Corollary}
\newtheorem{prop}[thm]{Proposition} 
\newtheorem{definition}{Definition}
\newtheorem{question}[thm]{Open Question}
\newtheorem{conj}[thm]{Conjecture}
\newtheorem{prob}{Problem}
\def\vol {{\mathrm{vol\,}}}
\def\squareforqed{\hbox{\rlap{$\sqcap$}$\sqcup$}}
\def\qed{\ifmmode\squareforqed\else{\unskip\nobreak\hfil
\penalty50\hskip1em\null\nobreak\hfil\squareforqed
\parfillskip=0pt\finalhyphendemerits=0\endgraf}\fi}

\def\cA{{\mathcal A}}
\def\cB{{\mathcal B}}
\def\cC{{\mathcal C}}
\def\cD{{\mathcal D}}
\def\cE{{\mathcal E}}
\def\cF{{\mathcal F}}
\def\cG{{\mathcal G}}
\def\cH{{\mathcal H}}
\def\cI{{\mathcal I}}
\def\cJ{{\mathcal J}}
\def\cK{{\mathcal K}}
\def\cL{{\mathcal L}}
\def\cM{{\mathcal M}}
\def\cN{{\mathcal N}}
\def\cO{{\mathcal O}}
\def\cP{{\mathcal P}}
\def\cQ{{\mathcal Q}}
\def\cR{{\mathcal R}}
\def\cS{{\mathcal S}}
\def\cT{{\mathcal T}}
\def\cU{{\mathcal U}}
\def\cV{{\mathcal V}}
\def\cW{{\mathcal W}}
\def\cX{{\mathcal X}}
\def\cY{{\mathcal Y}}
\def\cZ{{\mathcal Z}}

\def\NmQR{N(m;Q,R)}
\def\VmQR{\cV(m;Q,R)}

\def\Xm{\cX_m}

\def \C {{\mathbb C}}
\def \F {{\mathbb F}}
\def \L {{\mathbb L}}
\def \K {{\mathbb K}}
\def \Q {{\mathbb Q}}
\def \R {{\mathbb R}}
\def \Z {{\mathbb Z}}
\def \fS{\mathfrak S}

\def\\{\cr}
\def\({\left(}
\def\){\right)}
\def\fl#1{\left\lfloor#1\right\rfloor}
\def\rf#1{\left\lceil#1\right\rceil}

\def\Tr{{\mathrm{Tr}}}
\def\Im{{\mathrm{Im}}}

\def \bFp {\overline \F_p}

\newcommand{\pfrac}[2]{{\left(\frac{#1}{#2}\right)}}

\def \Prob{{\mathrm {}}}
\def\e{\mathbf{e}}
\def\ep{{\mathbf{\,e}}_p}
\def\epp{{\mathbf{\,e}}_{p^2}}
\def\em{{\mathbf{\,e}}_m}

\def\Res{\mathrm{Res}}

\def\vec#1{\mathbf{#1}}
\def\flp#1{{\left\langle#1\right\rangle}_p}

\def\mand{\qquad\mbox{and}\qquad}

\newcommand{\comm}[1]{\marginpar{%
\vskip-\baselineskip 
\raggedright\footnotesize
\itshape\hrule\smallskip#1\par\smallskip\hrule}}

\title{Polynomial Values in Small Subgroups of Finite Fields}

\author{Igor E. Shparlinski} 
\address{Department of Pure Mathematics, University of New South Wales, 
Sydney, NSW 2052, Australia}
\email{igor.shparlinski@unsw.edu.au}

\date{\today}

\begin{abstract} For a large prime $p$, 
and a polynomial $f$ over a finite field 
$\F_p$ of $p$ elements,  we obtain 
a lower bound on the size of the multiplicative 
subgroup of $\F_p^*$ containing $H\ge 1$ consecutive values $f(x)$, 
$x = u+1, \ldots, u+H$, uniformly over $f\in\F_p[X]$ and an $u \in \F_p$. 
\end{abstract}

\subjclass[2010]{11D79, 11T06}

\keywords{polynomial congruences, finite fields}

\maketitle

\section{Introduction}

\subsection{Background}

For a prime $p$, we use $\F_p$ to denote the finite field  of $p$ elements,
which we always assume to be represented by the set $\{0, \ldots, p-1\}$.

For a rational function $r(X) = f(X)/g(X)\in\F_p(X)$ with two relatively 
primes polynomials $f,g \in \F_p[X]$
and an  set $\cS \subseteq \F_p$, we use $r(\cS)$ to 
denote   the value set 
$$
r(\cS) = \{r(x)~:~x \in \cS, \ g(x) \ne 0\} \subseteq \F_p.
$$
Given for two sets $\cS, \cT  \subseteq \F_p$, 
we consider the size of the intersection of $r(\cS)$ and $\cT$, 
that is, 
$$
N_r(\cS,\cT) =\#\(r(\cS) \cap \cT\).
$$

Here, we are mostly interested in studying 
$N_r(\cI,\cG)$ for an interval $\cI$ 
of several  consecutive integers and 
a multiplicative subgroup $\cG$ of $\F_p^*$. 

We also use $T_r(H)$ to denote the
smallest possible $T$ such that there is an 
interval  $\cI = \{u+1, \ldots, u+H\}$ of $H$ consecutive integers 
and a multiplicative subgroup $\cG$ of $\F_p^*$
of order $T$ for which 
$$
r(\cI) \subseteq \cG
$$
and thus $N_r(\cI, \cG) = \# r(\cI)$.

It is shown in~\cite{GomShp} that  if  $f\in \F_p[X]$ 
is a polynomial of degree $d\ge 2$, 
then for any interval $\cI = \{u+1, \ldots, u+H\}$  of   
$H$ consecutive
integers and a subgroup $\cG$ of $\F_p^*$ of order $T$, 
the quantity $N_f(\cI, \cG)$ is ``small''.

 To formulate 
the result precisely we recall that
the notations $U = O(V)$, $U \ll V$ and  $V \gg U$  are all
equivalent to the inequality $|U| \le c\,V$ 
with some constant $c> 0$. 
Throughout the paper, the implied constants
in these symbols
may occasionally depend, where obvious, on  degrees and the number 
of variables of various polynomials, but are absolute otherwise.
We also use $o(1)$ to denote a quantity that tends to zero when 
one of the indicated parameters (usually $H$ or $p$) tends to infinity. 

Then, by the bound of~\cite{GomShp}, for  
 a polynomial  $f\in \F_p[X]$  of degree $d\ge 2$, 
we have 
\begin{equation}
\label{eq:NfIG}
N_f(\cI, \cG)\ll (1 + H^{(d+1)/4} p^{-1/4d}) H^{1/2d} T^{1/2}. 
\end{equation}
In particular, the bound~\eqref{eq:NfIG} implies that 
\begin{equation}
\label{eq:TfH}
T_f(H) \gg \min\{H^{2 - 1/d}, H^{-(d-1)(d-2)/2d}p^{1/2d} \}.
\end{equation}
For a linear fractional function 
$$
r(X) = a \frac{X+s}{X+t}
$$
with $s \not \equiv t \pmod p$, 
the bound of~\cite[Lemma~35]{BGKS1} implies that there is an 
absolute constant $c>0$ such that if for some positive integer $\nu$
we have 
$$
H \ge p^{c\nu^{-4}}
$$
then 
\begin{equation}
\label{eq:TrH-Mobius}
T_r(H) \gg H^{\nu + o(1)},
\end{equation}
as $H\to \infty$. 
For larger values of $H$, by~\cite[Bound~(29)]{BGKS1} we have
$$
N_r(\cI, \cG)\le \(1+  H^{3/4}p^{-1/4}\)T^{1/2} p^{o(1)},
$$
as $p\to \infty$. Thus
$$
T_r(H) \ge   \min\{H^{2} ,H^{1/2}p^{1/2} \} p^{o(1)},
$$

A series of other upper bounds on $N_{r}(\cS, \cT)$ 
and its multivariate generalisations, for various sets and $\cS$ and $\cT$
(such as intervals, subgroups, zero-sets of algebraic varieties and 
their Cartesian products)
and functions $r$, including multivariate functions,  are given 
in~\cite{Bour,BGKS1,BGKS2, Chang1,Chang2,Chang3,CCGHSZ,CKSZ,CGOS,Shp-GG}.  

\subsection{Our results} 

Here we use the methods of~\cite{BGKS1}, based on an application
effective Hilbert's Nullstellensatz, see~\cite{DKS,KiPaSo}, 
to obtain a variant of the bound of~\eqref{eq:TrH-Mobius} for 
polynomials and thus 
to improve~\eqref{eq:TfH} for small values of $H$.

Furthermore, combining some ideas from~\cite{GomShp} with a bound 
on the number on integer points on quadrics (which replaces the 
bound of Bombieri and Pila~\cite{BP} in the argument of~\cite{GomShp}), 
we  improve~\eqref{eq:NfIG}  for quadratic polynomials. In fact, this argument   stems from 
that of  Cilleruelo and Garaev~\cite{CillGar}.

\section{Preparations}

\subsection{Effective Hilbert's Nullstellensatz}

We recall that the logarithmic height of a nonzero polynomial $P \in
\Z[Z_1, \ldots, Z_n]$ is defined as the maximum
logarithm of the largest (by absolute value) coefficient of $P$.

Our argument uses  the following quantitative version
version of effective Hilbert's Nullstellensatz
due to   Krick,  Pardo and   Sombra~\cite[Theorem~1]{KiPaSo}.

\begin{lem}
\label{lem:Hilb} Let
$P_1, \ldots, P_N \in \Z[Z_1, \ldots, Z_n]$
be $N\ge 1$ polynomials in $n$ variables
without a common zero in $\C^n$
of degree at most $D\ge 3$ and of logarithmic height
at most $H$. Then there is a positive integer $b$
with
$$
\log b \le 4 n(n+1) D^{n}\(H + \log N + n D \log(n + 1)\)
$$
and polynomials $R_1, \ldots, R_N\in \Z[Z_1, \ldots, Z_n]$ such that
$$
P_1R_1+ \ldots + P_NR_N = b.
$$
\end{lem}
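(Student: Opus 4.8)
The plan is to read this as an \emph{arithmetic} effective Nullstellensatz and to split it into two layers: a purely geometric construction that produces a Bézout identity $\sum_i P_i R_i = b$ with good \emph{degree} control, followed by an arithmetic refinement that tracks the \emph{heights} of every auxiliary object that appears. The degree bound $D^n$ is nothing but the classical effective Nullstellensatz exponent of Brownawell and Kollár, and the hypothesis $D\ge 3$ is exactly what makes that exponent clean (quadratic systems pick up the usual lower-order corrections); so the genuinely substantial part to reprove here is the height estimate, and essentially every term on the right-hand side of the displayed inequality is a record of where the height gets amplified.

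The first reduction is to cut the number of polynomials down to $n+1$. Introducing auxiliary indeterminates $u_{ij}$ and forming the generic linear combinations $Q_j = \sum_{i=1}^{N} u_{ij} P_i$ for $j=0,\dots,n$, a prime-avoidance (Bertini-type) argument shows that for a Zariski-dense set of coefficient vectors the $Q_j$ still have no common zero in $\C^n$, and such a dense set meets a box of integers whose logarithmic size is $O(n\log D)$; moreover each $Q_j$ has height at most $H+\log N+O(n\log D)$, which is where the $\log N$ and the $nD\log(n+1)$ terms enter, and any Bézout identity for the $Q_j$ unwinds to one for the $P_i$ with the same degrees up to a bounded factor. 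For the resulting $n+1$ polynomials of degree $\le D$ in $n$ variables with empty zero set, I would homogenise to forms $\widetilde Q_0,\dots,\widetilde Q_n$ in $Z_0,\dots,Z_n$, remove the common zeros at infinity by adjoining one further generic linear form (equivalently, by a Rabinowitsch-type substitution), and so land on $n+1$ forms in $n+1$ variables with no common zero in $\mathbb{P}^n$. Their multivariate resultant is then a nonzero integer; it lies in the ideal generated by the forms with explicit cofactors (a classical fact about resultants), so it is the $b$ we want, and dehomogenising the cofactors and invoking the Brownawell--Kollár degree bound gives the $R_i$ with $\deg R_i \ll D^n$.

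The step I expect to be the main obstacle is the height estimate, and it is where one has to bring in arithmetic intersection theory. What is needed is an \emph{arithmetic Bézout inequality} for the Philippon height of projective cycles: a bound for the height of the intersection of a cycle with a hypersurface of the shape $h(\text{intersection}) \ll (\deg)\cdot h(\text{hypersurface}) + (\deg)\cdot h(\text{cycle}) + (\text{degree terms})$. Propagating this along the elimination --- each step multiplying in a degree factor and adding the relevant input height --- bounds the height of the resultant by a degree-weighted combination of the $h(\widetilde Q_j)$ plus corrections of size $O(nD^n\log(n+1))$, hence by $\ll D^n\bigl(H+\log N+nD\log(n+1)\bigr)$; the extra factor $n(n+1)$ then absorbs the accumulation of these inequalities over the $n$ successive eliminations together with the combinatorial losses (the number of monomials of bounded degree, the size of the Macaulay matrices) incurred in extracting the explicit cofactors. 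Pushing the constant all the way down to $4n(n+1)$ is the delicate endgame, and this is exactly what the explicit estimates on heights of resultants, of generic linear sections, and of intersection cycles assembled in~\cite{KiPaSo} are built to deliver.

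It is worth noting that the resultant-based construction can be replaced throughout by a duality/residue approach, in which the resultant is traded for a trace pairing against an auxiliary complete intersection; the degree bound comes out the same $D^n$ and the height bookkeeping is morally identical --- weighted sums of input heights plus degree corrections --- so which route to write up in full is largely a matter of taste.
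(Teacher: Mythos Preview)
The paper does not prove this lemma at all: it is quoted verbatim as~\cite[Theorem~1]{KiPaSo} and used as a black box. Your proposal is a faithful high-level sketch of the Krick--Pardo--Sombra argument (reduction to $n+1$ polynomials by generic linear combinations, homogenisation and resultants for the B\'ezout identity, arithmetic intersection theory for the height bookkeeping), but none of this is required here; the intended ``proof'' is simply a citation.
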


We note that~\cite[Theorem~1]{KiPaSo} gives explicit estimates 
on all other parameters as well (that is, on the heights and 
degrees of the  polynomials $R_1, \ldots, R_N$), see also~\cite{DKS}. 

\subsection{Some facts on algebraic integers}

We also need a bound of Chang~\cite[Proposition~2.5]{Chang0} on the divisor function in
algebraic number fields.
As usual, for algebraic number field $\K$ we use $\Z_\K$ 
to denote the ring of integers. 

\begin{lem}
\label{lem:Div ANF} Let $\K$ be a finite extension of $\Q$ of degree
$k = [\K:\Q]$. For
any nonzero algebraic integer $\gamma\in \Z_\K$ of logarithmic
height at most $H\ge 2$, the number of   pairs $(\gamma_1, \gamma_2)$
of  algebraic integers $\gamma_1,\gamma_2\in \Z_\K$ of
logarithmic  height at most $H$
with $\gamma=\gamma_1\gamma_2$ is at most $\exp\(O(H/\log H)\)$,
where the implied constant depends on $k$.
\end{lem}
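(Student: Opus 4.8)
The plan is to reduce the count to two essentially independent estimates --- the number of ideal factorisations of the principal ideal $(\gamma)$, and, for each such factor, the number of its generators of logarithmic height at most $H$. First note that, however the logarithmic height of an element of $\Z_\K$ is normalised (via the coefficients of its minimal polynomial over $\Q$, or via its coordinates in a fixed integral basis of $\Z_\K$), it is comparable up to a bounded factor to $\log\max_i|\gamma^{(i)}|$, where $\gamma^{(1)},\dots,\gamma^{(k)}$ are the conjugates of $\gamma$. Consequently, if $\gamma\in\Z_\K\setminus\{0\}$ has logarithmic height at most $H$, then $|\gamma^{(i)}|\le e^{O(H)}$ for every $i$, and hence
$$
1\le N(\gamma):=\left|N_{\K/\Q}(\gamma)\right|=\prod_{i=1}^{k}|\gamma^{(i)}|\le e^{O(H)},
$$
with implied constants depending on $k$, where $N(\mathfrak a)$ denotes the absolute norm of an ideal $\mathfrak a$, so $N(\gamma)=N\bigl((\gamma)\bigr)$. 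Now any factorisation $\gamma=\gamma_1\gamma_2$ in $\Z_\K$ induces the ideal factorisation $(\gamma)=(\gamma_1)(\gamma_2)$; writing $M$ for the maximum, over ideals $\mathfrak a\mid(\gamma)$, of the number of $\gamma_1\in\Z_\K$ with $(\gamma_1)=\mathfrak a$ and logarithmic height at most $H$, each ordered ideal factorisation $(\gamma)=\mathfrak a\mathfrak b$ accounts for at most $M$ admissible pairs $(\gamma_1,\gamma_2)$. Hence the quantity to be bounded is at most $\tau(\gamma)\,M$, where $\tau(\gamma):=\#\{\mathfrak a:\mathfrak a\mid(\gamma)\}$ is the ideal divisor function.

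For $\tau(\gamma)$ I would transplant the classical divisor bound, $\tau(\gamma)\le\exp\(O\(\log N(\gamma)/\log\log N(\gamma)\)\)$, which is proved essentially as over $\Z$. The only extra structural fact needed is that each rational prime has at most $k$ prime ideals of $\Z_\K$ above it and that every prime $\mathfrak p$ lies above a rational prime $p\le N(\mathfrak p)$, whence $\#\{\mathfrak p:N(\mathfrak p)\le y\}\le k\,\pi(y)\ll y/\log y$; granted this, the usual splitting of $\prod_j(e_j+1)$ for $(\gamma)=\prod_j\mathfrak p_j^{e_j}$ according to an optimally chosen size threshold on $N(\mathfrak p_j)$ carries over, the implied constant then depending on $k$. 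Since $\log N(\gamma)\le kH$, and since the trivial inequality $\tau(\gamma)\le N(\gamma)$ disposes of the range in which $N(\gamma)$ is small, this gives $\tau(\gamma)\le\exp\(O(H/\log H)\)$.

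Next I would estimate $M$. Fix an ideal $\mathfrak a\mid(\gamma)$; a non-principal $\mathfrak a$ contributes nothing, so suppose $(\gamma_1)=(\gamma_1')=\mathfrak a$ and set $u=\gamma_1'/\gamma_1\in\Z_\K^{*}$. From $|\gamma_1^{(i)}|\le e^{O(H)}$ together with $\prod_i|\gamma_1^{(i)}|=N(\mathfrak a)\ge1$ it follows that $e^{-O(H)}\le|\gamma_1^{(i)}|\le e^{O(H)}$ for every $i$, and therefore $\bigl|\log|u^{(i)}|\bigr|=O(H)$ for all $i$. Under Dirichlet's logarithmic embedding, $\Z_\K^{*}$ modulo its torsion maps onto a lattice $\Lambda$ of rank $r\le k-1$, and the bound just obtained confines the image of any admissible $u$ to a box of side $O(H)$. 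Since the first successive minimum of $\Lambda$ --- the minimum of $\max_i\bigl|\log|u^{(i)}|\bigr|$ over non-torsion units $u$ --- is bounded below in terms of $k$ alone, by a Dobrowolski-type lower bound for the Mahler measure of a non-torsion algebraic integer of degree at most $k$, a standard lattice-point estimate bounds the number of points of $\Lambda$ in that box by $O(H^{k-1})$. Allowing for the $O(1)$ roots of unity of $\K$, this yields $M\le H^{O(1)}$, with the implied constant depending on $k$.

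Combining the two estimates, the number of pairs $(\gamma_1,\gamma_2)$ is at most $\tau(\gamma)\,M\le\exp\(O(H/\log H)\)\cdot H^{O(1)}=\exp\(O(H/\log H)\)$, since $H^{O(1)}\le\exp\(O(H/\log H)\)$ for $H\ge2$. I expect the only genuine obstacle to be uniformity: keeping every implied constant dependent on $k=[\K:\Q]$ rather than on the particular field $\K$. The divisor-function step is automatically uniform over all fields of degree $k$; the delicate point is the count of units of bounded height, which rests on a lower bound, uniform in $\K$ among degree-$k$ fields, for the shortest nonzero vector of the unit lattice --- this is where an effective lower bound for heights of non-torsion algebraic integers enters. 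If one only needs an implied constant depending on $\K$, which suffices for the application later in the paper, the difficulty disappears and the remaining estimates are entirely routine.
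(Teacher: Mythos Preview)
The paper does not prove this lemma: it is quoted verbatim as Chang's result~\cite[Proposition~2.5]{Chang0}, with no argument supplied. So there is no in-paper proof to compare against.

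That said, your sketch is correct and is essentially the argument Chang gives. The decomposition into an ideal-divisor count times a bounded-height unit count is the natural one; your transplantation of the classical $\tau(n)\le n^{O(1/\log\log n)}$ bound to $\Z_\K$ via the observation that at most $k$ primes lie over each rational prime is exactly how the ideal-divisor estimate is obtained, and your unit count via the logarithmic embedding and a packing bound in the unit lattice is standard. Your identification of the one nontrivial uniformity issue is also accurate: to get a constant depending only on $k=[\K:\Q]$ rather than on $\K$, one needs the first minimum of the unit lattice bounded below uniformly over all degree-$k$ fields. This does not actually require the full strength of Dobrowolski --- a finiteness argument suffices: a non-torsion unit $u$ with $\max_i\bigl|\log|u^{(i)}|\bigr|<\delta$ has minimal polynomial with bounded integer coefficients, hence lies in a finite set depending only on $k$ and $\delta$, and for $\delta$ small enough this set contains only roots of unity. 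With that in hand, the packing bound gives $M\ll_k H^{k-1}$ and the combination proceeds as you wrote. For the application in the present paper (Theorem~\ref{thm:N poly}) the field $\K$ produced by Lemma~\ref{lem:SmallZero} has degree $[\K:\Q]=O(1)$, so either form of the constant suffices.
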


Finally, as in~\cite{BGKS1}, we use the following 
result, this is exactly the statement that
is established in the proof of~\cite[Lemma~2.14]{Chang0},
see~\cite[Equation~(2.15)]{Chang0}.

\begin{lem}
\label{lem:SmallZero} Let $P_1, \ldots, P_N, P \in \Z[Z_1, \ldots,
Z_n]$ be $N+1 \ge 2$ polynomials in $n$ variables of degree at
most $D$ and of logarithmic height at most $H\ge1$. If  the
zero-set
$$
P_1(Z_1, \ldots, Z_n) = \ldots =P_N(Z_1, \ldots, Z_n) = 0 \quad
\text{and}\quad
 P(Z_1, \ldots, Z_n) \ne  0
$$
is not empty then it has a point $(\beta_1, \ldots, \beta_n)$
in an extension $\K$ of $\Q$ of degree $[\K:\Q]\le C_1(D,n)$
such that their  minimal polynomials are  of  logarithmic
height at most $C_2(D,N,n) H$,
where $C_1(D,n)$ depends only on $D$, $n$ and $C_2(D,N,n)$ depends
only on $D$, $N$ and $n$.
\end{lem}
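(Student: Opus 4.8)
I would prove this lemma by an elimination-theoretic descent on the number of variables, exhibiting the coordinates $\beta_1, \ldots, \beta_n$ one at a time while tracking degrees and heights through resultant estimates. First I would remove the condition $P \ne 0$ by the Rabinowitsch trick: adjoin a new variable $Z_{n+1}$ and the equation $Z_{n+1}P - 1 = 0$. The affine variety
$$
V = \{(z_1, \ldots, z_{n+1}) \in \C^{n+1}~:~ P_i(z_1, \ldots, z_n) = 0\ (1\le i\le N),\ z_{n+1}P(z_1, \ldots, z_n) = 1\}
$$
is nonempty exactly when the original quasi-affine set is, and it is cut out by $N+1$ polynomials of degree at most $D+1$ and logarithmic height at most $H+1$. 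It then suffices to produce a point of $V$ of the required arithmetic complexity and discard its last coordinate, noting that $P$ is automatically nonzero there.

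Next I would descend on the variables, distinguishing two cases for the projection $\pi\colon V \to \C$ onto the $Z_1$-coordinate. If $\pi(V)$ is infinite, then (an effective form of Chevalley's theorem, supplied by a resultant built from the defining equations) there is a finite set $S\subseteq\C$ of size bounded in terms of $D$ and $n$ alone with $V\cap\{Z_1 = z_1\}\ne\emptyset$ for all $z_1\notin S$; I then take $\beta_1$ to be one of the boundedly many integers $0, 1, 2, \ldots$ that avoid $S$, so that $\beta_1$ has bounded height and generates no extension. If $\pi(V)$ is finite, each of its points is a root of the generator $Q_1$ of the elimination ideal $I(V)\cap\Q[Z_1]$; the quantitative form of effective elimination underlying Lemma~\ref{lem:Hilb} (the estimates of Krick, Pardo and Sombra on degrees and heights of the auxiliary polynomials, or classical bounds on multipolynomial resultants) gives $\deg Q_1 \le c_1(D,n)$ and $Q_1$ of logarithmic height at most $c_2(D,N,n)H$, so taking $\beta_1$ a root of $Q_1$ we get $[\Q(\beta_1):\Q] \le \deg Q_1$ and, by Mignotte's bound on polynomial factors, the minimal polynomial of $\beta_1$ of logarithmic height at most $c_3(D,N,n)H$ (additive constants being absorbed since $H \ge 1$). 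In either case I substitute $Z_1 = \beta_1$ into $P_1, \ldots, P_N$ and $Z_{n+1}P - 1$, reduce the coefficients modulo the minimal polynomial of $\beta_1$ and clear denominators, obtaining a nonempty zero-set in the remaining variables over $\Q(\beta_1)$ still given by polynomials of degree at most $D+1$ and of logarithmic height at most $c_4(D,N,n)H$. Iterating the step through the remaining variables and composing the extensions yields a point $(\beta_1, \ldots, \beta_n)\in\K^n$ with $[\K:\Q] \le C_1(D,n)$; since the height is multiplied by a bounded factor at each of the at most $n$ rounds it stays linear in $H$, so the minimal polynomials over $\Q$ of $\beta_1, \ldots, \beta_n$ have logarithmic height at most $C_2(D,N,n)H$.

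The step I expect to be the main obstacle is the effective elimination in the finite-image case: one must produce the univariate eliminant $Q_1$ with \emph{simultaneous} control of its degree and its logarithmic height, and in particular must track how that height depends on the number $N$ of equations alongside $D$ and $n$. This is precisely where the explicit arithmetic B\'ezout and resultant estimates are needed, and one cannot merely quote Lemma~\ref{lem:Hilb}, which concerns an inconsistent system rather than the geometry of a nonempty variety. A secondary, more routine point is the bookkeeping in the substitution step: one has to verify that reducing modulo the minimal polynomial of $\beta_j$ and clearing denominators degrades the height by at most a bounded factor per round and preserves the degree bound $D+1$, which follows from standard estimates for the heights of algebraic numbers under the field operations together with the degree of the field staying bounded throughout.
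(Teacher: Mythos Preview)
The paper does not give its own proof of this lemma at all: it is quoted verbatim as a known result, with the justification ``this is exactly the statement that is established in the proof of~\cite[Lemma~2.14]{Chang0}''. So there is nothing to compare your argument against within the paper itself.

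As for your outline on its own merits, the strategy is the standard one and is essentially what underlies Chang's argument: Rabinowitsch to remove the inequality, then a coordinate-by-coordinate descent where at each step one either specialises to a small integer (generic fibre nonempty) or to a root of a univariate eliminant (image finite). You have correctly located the real work in the finite-image branch: producing an eliminant $Q_1$ with simultaneous control of degree and logarithmic height is exactly an arithmetic elimination/arithmetic B\'ezout statement, and one needs the quantitative results of Philippon or Krick--Pardo--Sombra (or Chow-form estimates as in D'Andrea--Krick--Sombra) rather than Lemma~\ref{lem:Hilb}, which only treats the empty case. Two points deserve more care than your sketch gives them. First, after the first specialisation you are working over $\Q(\beta_1)$, so the next elimination step is relative to a number field; the clean way to do the bookkeeping is with the absolute (Weil) logarithmic height rather than the naive height of integer coefficients, since the latter does not behave well under field extension. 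Second, your phrase ``the generator $Q_1$ of the elimination ideal $I(V)\cap\Q[Z_1]$'' conflates the radical ideal of $V$ with the ideal generated by the given $P_i$; for height bounds you should work with a resultant or $u$-resultant built directly from the $P_i$, which already vanishes on $\pi(V)$ and whose height you can actually estimate.
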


\subsection{Integral points on quadrics}

The following bound on the number of integral points on 
quadrics is given in~\cite[Lemma~3]{KonShp}. 

\begin{lem}
\label{lem:Quadratic}
Let
$$
G(X,Y)= AX^2  + BXY + C Y^2 + DX + EY + F\in \Z[X,Y]
$$
be an irreducible quadratic polynomial with coefficients of
size at most $H$. Assume that $G(X,Y)$ is not 
affinely 
equivalent to a parabola $Y = X^2$ and has a nonzero determinant
$$
\Delta = B^2-4AC \ne 0.
$$
 Then, as $H\to \infty$, the equation $G(x,y)=0$ has
at most $H^{o(1)}$ integral solutions $(x,y) \in [0,H]\times[0,H]$. 
\end{lem}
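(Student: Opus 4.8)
The plan is to reduce the conic $G(x,y)=0$, by completing the square, to a norm equation $W^2-\Delta U^2=M$ in the quadratic field $\K=\Q(\sqrt\Delta)$, in which $U,W$ are bounded integer linear forms in $x,y$ and $M$ is an integer of size $H^{O(1)}$, and then to bound the number of its solutions by the divisor bound of Lemma~\ref{lem:Div ANF}. We may assume $H$ is large. First note that since $\Delta\ne0$ the polynomial $G$ cannot be affinely equivalent to a parabola, because an affine change of variables multiplies $\Delta$ by the square of the determinant of its linear part; so the second hypothesis is automatic and only $\Delta\ne0$ will be used.

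Interchanging $X$ and $Y$, which preserves both the box $[0,H]\times[0,H]$ and $\Delta$, we may assume $A\ne0$, the only exception being $A=C=0$. In that exceptional case $\Delta=B^2\ne0$, hence $B\ne0$, and multiplying by $B$ gives $B\,G=(BX+E)(BY+D)-M$ with $M=ED-BF$, which is nonzero because $G$ is irreducible; every solution then produces a factorisation $M=\gamma_1\gamma_2$ with integers $|\gamma_i|\le H^{O(1)}$ from which $(x,y)$ is recovered uniquely, and there are at most $|M|^{o(1)}=H^{o(1)}$ of these. So from now on $A\ne0$.

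Next I would multiply $G$ by $4A\Delta$ and complete the square twice: with $U=2Ax+By+D$ and $W=\Delta y-(2AE-BD)$ one verifies the identity
\begin{equation*}
4A\Delta\,G(x,y)=\Delta U^2-W^2+M,\qquad M=(2AE-BD)^2+\Delta(4AF-D^2),
\end{equation*}
so that $G(x,y)=0$ is equivalent to $W^2-\Delta U^2=M$. Here $|M|\le H^{O(1)}$; for $(x,y)\in[0,H]\times[0,H]$ we have $|U|,|W|\le H^{O(1)}$; and the map $(x,y)\mapsto(W,U)$ is injective, its Jacobian being $-2A\Delta\ne0$. If $M=0$ then $W^2=\Delta U^2$, which forces $W=U=0$ unless $\Delta$ is a perfect square — in which case $G$ would be reducible — so there is then at most one solution; hence we may assume $M\ne0$.

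Finally, set $\K=\Q(\sqrt\Delta)$, so $[\K:\Q]\le2$, and to a solution associate $\gamma_1=W-U\sqrt\Delta$ and $\gamma_2=W+U\sqrt\Delta$. Since $\sqrt\Delta$ is an algebraic integer and $W,U\in\Z$, both $\gamma_1,\gamma_2$ lie in $\Z_\K$; their minimal polynomials over $\Q$, being monic divisors of $Z^2-2WZ+M$, have logarithmic height $O(\log H)$; and $\gamma_1\gamma_2=W^2-\Delta U^2=M$. Distinct solutions give distinct pairs $(\gamma_1,\gamma_2)$, since $W=(\gamma_1+\gamma_2)/2$ and $U=(\gamma_2-\gamma_1)/(2\sqrt\Delta)$ recover $(x,y)$. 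Thus the number of solutions is at most the number of factorisations $M=\gamma_1\gamma_2$ in $\Z_\K$ with $\gamma_1,\gamma_2$ of logarithmic height $O(\log H)$, which by Lemma~\ref{lem:Div ANF} — applied with its logarithmic-height parameter taken to be a sufficiently large multiple of $\log H$ — is $\exp\(O\(\log H/\log\log H\)\)=H^{o(1)}$. The conceptual point, and the main thing to get right, is the choice of tool: a general bound for lattice points on a degree-two curve, such as that of Bombieri and Pila~\cite{BP}, would only give $H^{1/2+o(1)}$, whereas the arithmetic of $\Q(\sqrt\Delta)$ yields $H^{o(1)}$; the only genuinely fiddly parts are keeping all the auxiliary quantities of size $H^{O(1)}$ and clearing the degenerate subcases.
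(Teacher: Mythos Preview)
Your argument is correct. In the paper this lemma is not proved at all but simply quoted from~\cite[Lemma~3]{KonShp}, so there is no in-paper proof to compare against; your route --- completing the square to reach the Pell-type norm equation $W^2-\Delta U^2=M$ with $|U|,|W|,|M|=H^{O(1)}$, then factoring $M=(W-U\sqrt\Delta)(W+U\sqrt\Delta)$ in $\Z_\K$ for $\K=\Q(\sqrt\Delta)$ and invoking Lemma~\ref{lem:Div ANF} --- is the standard one and almost certainly what~\cite{KonShp} does. Your side remark that the ``not affinely equivalent to a parabola'' hypothesis is already forced by $\Delta\ne0$ (since an affine change multiplies $\Delta$ by the square of its linear determinant, and the parabola has $\Delta=0$) is also correct.
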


\subsection{Small values of linear functions}

We need a result about small values of residues modulo $p$
of several linear functions. Such a result has been 
derived in~\cite[Lemma~3.2]{CSZ} from 
 the Dirichlet pigeon-hole principle.
Here use a slightly more precise and explicit 
form of this result which is derived in~\cite{GG} 
from the {\it Minkowski theorem\/}, see also~\cite{GomShp}.

For an integer $a$ we use $\flp{a}$ to denote the smallest by
absolute value residue of $a$ modulo $p$, that is
$$
\flp{a} = \min_{k\in \Z} |a - kp|.
$$

\begin{lem}
\label{lem:Red}
For any real numbers $V_1,\ldots, V_{m} $ with
$$
p> V_1,\ldots, V_{m} \ge 1 \mand   V_1\ldots V_{m} > p^{m-1}
$$
and  
integers $b_1, \ldots, b_{m}$,  there exists an integer $v$ with $\gcd(v,p) =1$
such that
$$
\flp{b_i v} \le V_i, \qquad i =1, \ldots, m.
$$
\end{lem}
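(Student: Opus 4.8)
The plan is to derive this from \emph{Minkowski's theorem} on lattice points in symmetric convex bodies. Rewriting each condition $\flp{b_iv}\le V_i$ in the form $|b_iv-k_ip|\le V_i$ for a suitable integer $k_i$, we are seeking a nonzero integer vector $(v,k_1,\dots,k_m)\in\Z^{m+1}$ whose image under the linear map
$$
\Phi\colon (v,k_1,\dots,k_m)\longmapsto\(v,\ b_1v-k_1p,\ \dots,\ b_mv-k_mp\)
$$
lands in a prescribed box. The first step is to observe that $L=\Phi(\Z^{m+1})$ is a full-rank lattice in $\R^{m+1}$: the matrix of $\Phi$ is triangular with diagonal entries $1,-p,\dots,-p$, so $L$ has covolume exactly $p^{m}$.

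The second step is to apply Minkowski's theorem to the compact, convex, centrally symmetric box
$$
K=\{(x_0,x_1,\dots,x_m)\in\R^{m+1}:\ |x_0|\le W,\ |x_i|\le V_i\ \text{for}\ 1\le i\le m\},
$$
with the choice $W=p^{m}/(V_1\cdots V_m)$. Then $\vol(K)=2^{m+1}W\,V_1\cdots V_m=2^{m+1}p^{m}=2^{m+1}\,(\text{covolume of }L)$, so the version of Minkowski's theorem for compact bodies produces a nonzero $(v,k_1,\dots,k_m)\in\Z^{m+1}$ with $|v|\le W$ and $|b_iv-k_ip|\le V_i$ for all $i$; in particular $\flp{b_iv}\le V_i$.

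The third step is to verify the claimed properties of $v$. If $v=0$, then each inequality $|k_i|\,p\le V_i<p$ forces $k_i=0$, contradicting that the vector is nonzero; hence $v\ne0$. Since $V_1\cdots V_m>p^{m-1}$ we have $W<p$, so $1\le|v|\le W<p$, and as $p$ is prime this yields $\gcd(v,p)=1$, completing the proof.

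The argument is short, and the only point needing attention is the bookkeeping around the auxiliary coordinate $x_0$: its bound $W$ must be large enough for $\vol(K)$ to reach the Minkowski threshold $2^{m+1}p^m$ — this is exactly where the hypothesis $V_1\cdots V_m>p^{m-1}$ is used — and at the same time smaller than $p$, so that a nonzero solution is automatically coprime to $p$. The remaining hypotheses $1\le V_i<p$ are used, respectively, to keep $K$ nondegenerate and to rule out the trivial solution $v=0$.
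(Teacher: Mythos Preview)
Your argument is correct and is exactly the route the paper points to: the paper does not give its own proof of this lemma but simply records that it is derived in~\cite{GG} from Minkowski's theorem, and your application of Minkowski to the lattice $\Phi(\Z^{m+1})$ of covolume $p^m$ against the box of volume $2^{m+1}p^m$ is the standard way to carry this out. The bookkeeping you flag (that $W<p$ forces $\gcd(v,p)=1$, and that $V_i<p$ rules out $v=0$) is precisely what is needed.
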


\section{Main Results}

\subsection{Arbitrary polynomials} 

For a set $\cA$ in an arbitrary semi-group, we use  $\cA^{(\nu)}$ to denote 
the $\nu$-fold product set, that 
is
$$
\cA^{(\nu)}=\{a_1\ldots a_\nu~:~ a_1,\ldots, a_\nu \in \cA\}.
$$

First we note that in order to get a lower bound on  $T_f(\cI, \cG)$ 
it is enough to give a lower bound on the cardinality of  $f(\cI)^{(\nu)}$
for any integer $\nu \ge 1$.

\begin{thm} \label{thm:N poly} For every positive integer $\nu$ 
there is a constant  $c(\nu) > 0$ 
depending only on $\nu$ such that for any  
polynomial  $f\in \F_p[X]$ of degree $d\ge 1$ 
and interval $\cI$  of   
$$
H \le c(\nu) p^{1/4d(d+1)\nu^{d+1}}
$$ 
consecutive integers,  we have 
$$
\# f(\cI)^{(\nu)}\ge H^{\nu + o(1)},
$$
as $H\to \infty$.
\end{thm}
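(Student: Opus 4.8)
The plan is to bound the number of collisions in the $\nu$-fold product set $f(\cI)^{(\nu)}$ and show that, barring a degenerate situation forbidden by the smallness of $H$, there cannot be too many of them. Concretely, suppose $\#f(\cI)^{(\nu)} \le H^{\nu-\delta}$ for some fixed $\delta>0$; then by a standard pigeon-hole/energy count there is a single value $\lambda\in\F_p^*$ with at least $H^{\nu}/\#f(\cI)^{(\nu)} \ge H^{\delta}$ representations $f(x_1)\cdots f(x_\nu)\equiv\lambda\pmod p$ with all $x_i\in\cI$. The aim is to show such a congruence, viewed as a system of polynomial equations over $\Z$ after clearing the modulus, forces the tuple $(x_1,\dots,x_\nu)$ to lie on a proper subvariety, so that the number of solutions in the box $\cI^\nu$ of side $H$ is bounded by something like $H^{\nu-1+o(1)}$, contradicting $H^{\delta}$ representations once $\delta$ is large enough — actually one iterates to reach the sharp exponent $\nu+o(1)$ for $\#f(\cI)^{(\nu)}$ itself.

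The key mechanism for producing the subvariety is effective Hilbert's Nullstellensatz (Lemma~\ref{lem:Hilb}), exactly as in~\cite{BGKS1}. First I would translate the multiplicative relation $f(x_1)\cdots f(x_\nu)\equiv\lambda$ into an equation over $\Z$: writing $\cI=\{u+1,\dots,u+H\}$ and substituting $x_i=u+1+y_i$ with $y_i\in\{0,\dots,H-1\}$, the polynomial $F(Y_1,\dots,Y_\nu)=\prod_{i=1}^\nu f(u+1+Y_i)-\lambda'$ (for the appropriate integer representative $\lambda'$ of $\lambda$) has degree $d\nu$ in $\nu$ variables and logarithmic height $O(\log p)$, and it vanishes mod $p$ at every representation. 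If $F$ had \emph{no} common zero with suitable auxiliary polynomials over $\C$, Lemma~\ref{lem:Hilb} would produce an identity $\sum P_j R_j = b$ with $\log b$ bounded by roughly $4\nu(\nu+1)(d\nu)^{\nu}\log p$ — and here is where the hypothesis $H \le c(\nu)p^{1/4d(d+1)\nu^{d+1}}$ enters: it is calibrated precisely so that the modulus $p$ dwarfs this $b$, forcing a contradiction unless the zero set is genuinely positive-dimensional over the algebraic closure. One then invokes Lemma~\ref{lem:SmallZero} to obtain a point on the intersection variety defined over a number field $\K$ of bounded degree whose coordinates have controlled height.

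The heart of the argument — and the step I expect to be the main obstacle — is the inductive/structural analysis of this low-height algebraic point. One wants to argue that the existence of many solutions $(y_1,\dots,y_\nu)\in\{0,\dots,H-1\}^\nu$ to $F=0\pmod p$ forces, via Nullstellensatz applied to $F$ together with a difference polynomial such as $F(\dots,Y_i,\dots)-F(\dots,Y_i',\dots)$ or the projections to fewer coordinates, either a lift to an exact integer identity (a factorization $\gamma=\gamma_1\gamma_2$ of a bounded-height algebraic integer, whose multiplicity is controlled by Chang's divisor bound Lemma~\ref{lem:Div ANF}, giving the $H^{o(1)}$ slack) or a drop in the number of free variables. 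Iterating on $\nu$: the projection of the solution set to the last coordinate $y_\nu$ either covers $\gtrsim H^{1-o(1)}$ values — in which case a counting argument combined with the $(\nu-1)$-fold case closes the loop — or it is confined to $H^{o(1)}$ values, and one fixes $y_\nu$ and applies the inductive hypothesis in $\nu-1$ variables. Making the bookkeeping of heights, degrees, and the exponent of $p$ consistent through this recursion — in particular verifying that the tower of constants $c(\nu)$ and the exponent $1/4d(d+1)\nu^{d+1}$ survive $\nu$ iterations — is the delicate part; the arithmetic-geometric inputs (Lemmas~\ref{lem:Hilb}, \ref{lem:Div ANF}, \ref{lem:SmallZero}) are essentially black boxes once the induction is set up correctly.
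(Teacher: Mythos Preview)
Your proposal has a structural gap in the Nullstellensatz step. You take the arguments $Y_1,\dots,Y_\nu$ as the variables and leave the coefficients of $f$ (and the representative $\lambda'$) as fixed integers; since those integers can be as large as $p$, your polynomial $F$ has logarithmic height $O(\log p)$, not $O(\log H)$. Plugging this into Lemma~\ref{lem:Hilb} then gives $\log b$ bounded by a constant (depending on $d,\nu$ and strictly greater than $1$) times $\log p$, so $b$ is a genuine power of $p$ and the divisibility $p\mid b$ yields no contradiction whatsoever. Your bound on $\log b$ does not involve $H$ at all, so the hypothesis $H\le c(\nu)p^{1/4d(d+1)\nu^{d+1}}$ never enters; the claim that ``$p$ dwarfs this $b$'' is simply false. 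The vague ``auxiliary polynomials'' and the induction on $\nu$ do not repair this, because every polynomial you can write down in the $Y$-variables still carries the large coefficients of $f$.

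The paper's key idea is to reverse the roles: make the \emph{coefficients} $a_1,\dots,a_d$ of $f$ into indeterminates $Z_1,\dots,Z_d$, and for each colliding pair $\vec{x},\vec{y}\in\cI^\nu$ form
\[
P_{\vec{x},\vec{y}}(Z_1,\dots,Z_d)=\prod_{i=1}^\nu\Bigl(x_i^d+\sum_{k=0}^{d-1} Z_{d-k}x_i^k\Bigr)-\prod_{i=1}^\nu\Bigl(y_i^d+\sum_{k=0}^{d-1} Z_{d-k}y_i^k\Bigr).
\]
These polynomials have degree $\le\nu$ in $d$ variables and, crucially, logarithmic height $O(\log H)$ because only the small integers $x_i,y_i\le H$ occur as coefficients. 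Now Lemma~\ref{lem:Hilb} gives $\log b\le 4d(d+1)\nu^{d+1}\log H+O(1)$, which \emph{is} below $\log p$ under the stated hypothesis on $H$; hence if a maximal linearly independent subfamily had no common complex zero one would get $p\mid b<p$, a contradiction. So there is a common zero, and by Lemma~\ref{lem:SmallZero} one may take it to be a point $(\beta_1,\dots,\beta_d)$ of height $O(\log H)$ in a number field $\K$ of bounded degree. Every collision $\prod f(x_i)\equiv\prod f(y_i)\pmod p$ then forces the corresponding identity $\prod\bigl(x_i^d+\sum\beta_{d-k}x_i^k\bigr)=\prod\bigl(y_i^d+\sum\beta_{d-k}y_i^k\bigr)$ in $\K$, and Chang's divisor bound (Lemma~\ref{lem:Div ANF}) finishes the count directly---no induction on $\nu$ is needed.
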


\begin{proof}
Clearly, we can assume that 
$$f(X)= X^d + \sum_{k=0}^{d-1} a_{d-k} X^k
$$ 
is monic. 

It is also clear that we can assume that $\cI = \{1, \ldots, H\}$. 

We consider the collection $\cP\subseteq \Z[Z_1,  \ldots, Z_d]$
of  polynomials
$$
P_{\vec{x},\vec{y}}(Z_1,  \ldots, Z_d) =
\prod_{i=1}^\nu  \(x_i^d + \sum_{k=0}^{d-1} Z_{d-k} x_i^k\)
-  \prod_{i=1}^\nu \(y_i^d + \sum_{k=0}^{d-1} Z_{d-k} y_i^k\),
$$
where $\vec{x} = (x_1, \ldots, x_\nu)$ and
$\vec{y} = (y_1, \ldots, y_\nu)$ are integral vectors with
entries in $[1,H]$ and such that
$$
P_{\vec{x},\vec{y}}(a_1, \ldots, a_k) \equiv 0 \pmod p.
$$
Note that 
$$
P_{\vec{x},\vec{y}}(a_1, \ldots, a_k) \equiv \prod_{i=1}^\nu  f(x_i)
-\prod_{i=1}^\nu  f(y_i)\pmod p.
$$

Clearly if $P_{\vec{x},\vec{y}}$ is identical to zero then, by the 
uniqueness of polynomial factorisation in the ring $ \Z[Z_1,  \ldots, Z_d]$, 
the components of $\vec{y}$ are permutations of those of $\vec{x}$. 
So in this case we obviously obtain 
$$
\# f(\cI)^{(\nu)}\ge \frac{1}{\nu!} \(\# f(\cI)\)^{\nu} \gg H^\nu.
$$

Hence, we now assume that
 $\cP$ contains non-zero polynomials.

Note that every $P\in \cP$ is of degree at most $\nu$
and of logarithmic height at most $\nu \log H + O(1)$.

We take a family $\cP_0$ containing the largest possible number
$$
N \le (\nu +1)^d
$$
of linearly independent polynomials $P_1, \ldots, P_N \in \cP$,  and consider the
variety
$$
\cV: \ \{(Z_1,  \ldots, Z_d) \in \C^d~:~P_1(Z_1,  \ldots, Z_d) = \ldots =P_N(Z_1,  \ldots, Z_d) = 0\}.
$$

Assume that $\cV = \emptyset$.  Then by
Lemma~\ref{lem:Hilb} we see that there are polynomials
$R_1, \ldots, R_N \in \Z[Z_1,  \ldots, Z_d]$ and a positive integer
$b$ with
\begin{equation}
\label{eq:b small birat}
\log b \le  4d(d+1) \nu^{d}(\nu \log h  + O(1)) \le 4d(d+1) \nu^{d+1} \log h + O(1)
\end{equation}
and such that
\begin{equation}
\label{eq:b lin comb}
P_1R_1+ \ldots + P_NR_N = b 
\end{equation}

Substituting 
$$(Z_1,  \ldots, Z_d) = (a_1, \ldots, a_k)
$$ 
in~\eqref{eq:b lin comb}, we see that the left hand side of~\eqref{eq:b lin comb}
is divisible by $p$. Since $b \ge 1$ we obtain $p\le b$.  Taking an appropriately small
values of $c(\nu)$ in the condition of the theorem, we see
from~\eqref{eq:b small birat} that this is impossible.

Therefore the variety $\cV$ 
is nonempty. Applying Lemma~\ref{lem:SmallZero}
we see that  it has a point $(\beta_1, \ldots, \beta_d)$
with components of   logarithmic height $O(\log h)$
in an extension $\K$ of $\Q$ of degree $[\K:\Q] = O(1)$.

 Consider the maps
$\Phi:\  \cI^\nu  \to \F_p$
given by
$$
\Phi: \ \vec{x} = (x_1, \ldots, x_\nu) \mapsto \prod_{j=1}^\nu f(x_j)
$$
and $\Psi:  \cI^\nu  \to \K$
given by
$$
\Psi: \ \vec{x} = (x_1, \ldots, x_\nu) \mapsto  \prod_{j=1}^\nu \(x_i^d + \sum_{k=0}^{d-1} \beta_{d-k} x_i^k\).
$$
By construction of  $(\beta_1, \ldots, \beta_d)$ we have that
if $\Phi(\vec{x}) = \Phi(\vec{y})$ then 
$$P_{\vec{x},\vec{y}}(a_1, \ldots, a_k)\equiv 0 \pmod p,
$$
thus $P_{\vec{x},\vec{y}}(Z_1,  \ldots, Z_d)  \in \cP$. 
Recalling the definitions of the family $\cP_0$ and of $(\beta_1, \ldots, \beta_d)$, 
we see that 
$P_{\vec{x},\vec{y}}(\beta_1, \ldots, \beta_d) = 0$.
Hence $\Psi(\vec{x}) = \Psi(\vec{y})$. We now conclude that 
for every $\vec{x}$ the multiplicity 
of the value $\Phi(\vec{x})$ in the 
image set $\Im \Phi$  of the map $\Phi$
is at most the multiplicity 
of the value $\Phi(\vec{x})$ in the 
image set $\Im \Psi$  of the map $\Psi$. 
Thus, 
$$
\# f(\cI)^{(\nu)}= \# \Im \Phi\ge \# \Im \Psi = \# \cC^{(\nu)},
$$
where 
$$
\cC = \left\{x^d + \sum_{k=0}^{d-1} \beta_{d-k}x^d~:~ 1\le x\le H \right\}
\subseteq \K.
$$
Using Lemma~\ref{lem:Div ANF}, we conclude that  
$\# \cC^{(\nu)} \ge H^{\nu + o(1)}$, as $H\to \infty$, and derive the result.
\end{proof}

\subsection{Quadratic polynomials} 

For quadratic square-free polynomials $f$ using 
Lemma~\ref{lem:Quadratic} instead of the 
bound of Bombieri and Pila~\cite{BP} in the argument 
of~\cite{GomShp} we immediately obtain the following result. 

\begin{thm} \label{thm:N quadr}
Let   $f(X)\in \F_p[X]$ be a square-free quadratic polynomial. 
For any interval $\cI$  of   
$H$ consecutive
integers and a subgroup $\cG$ of $\F_p^*$ of order $T$, 
we have 
$$
N_f(\cI, \cG)\le \(1+ H^{3/4}p^{-1/8}\)T^{1/2} p^{o(1)},
$$
as $H\to \infty$.
\end{thm}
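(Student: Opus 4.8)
The plan is to follow the proof of the bound~\eqref{eq:NfIG} for $d=2$ from~\cite{GomShp} (which itself goes back to the argument of Cilleruelo and Garaev~\cite{CillGar}), changing only the very last counting step: where that argument invokes the Bombieri--Pila bound~\cite{BP} to count integral points on a plane conic in a box --- giving $H^{1/2+o(1)}$ --- one instead invokes Lemma~\ref{lem:Quadratic}, which gives $H^{o(1)}$, thereby removing a factor $H^{1/4}$ from~\eqref{eq:NfIG}. I may assume $p$ is odd, and (after translating $X\mapsto X+u$, and noting that the leading coefficient of $f$ cancels in every ratio that appears below) that $\cI=\{1,\dots,H\}$ and $f(X)=X^2+aX+b$ with $a,b\in\{0,\dots,p-1\}$; that $f$ is square-free means $a^2-4b\ne 0$ in $\F_p$.

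First I would record the standard multiplicative energy inequality. Put $J=N_f(\cI,\cG)$ and choose $\cX\subseteq\cI$ with $\#\cX=J$ on which $f$ takes $J$ pairwise distinct values, each lying in $\cG$. Let $I$ be the number of quadruples $(x_1,x_2,x_3,x_4)\in\cX^4$ with $f(x_1)f(x_4)\equiv f(x_2)f(x_3)\pmod p$. Since every ratio $f(x)/f(y)$ with $x,y\in\cX$ lies in the group $\cG$ of order $T$, a Cauchy--Schwarz argument gives $I\ge J^4/T$. On the other hand $I=\sum_\lambda r(\lambda)^2$, where $r(\lambda)=\#\{(x,y)\in\cX^2:\ f(x)\equiv\lambda f(y)\pmod p\}$ satisfies $\sum_\lambda r(\lambda)=J^2$, vanishes for $\lambda=0$, and equals $J$ for $\lambda=1$ because the values are distinct; splitting off the term $\lambda=1$ gives $I\le J^2\bigl(1+\max_{\lambda\ne 0,1}R(\lambda)\bigr)$, where $R(\lambda)=\#\{(x,y)\in\cI^2:\ f(x)\equiv\lambda f(y)\pmod p\}$. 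Combining the two bounds, $J^2\le T\bigl(1+\max_{\lambda\ne 0,1}R(\lambda)\bigr)$, so it suffices to prove
\[
R(\lambda)\ll \bigl(1+H^{3/2}p^{-1/4}\bigr)p^{o(1)}\qquad\text{uniformly in }\lambda\ne 0,1,
\]
after which taking square roots yields the theorem.

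To bound $R(\lambda)$ I would lift the congruence to $\Z$. Write the congruence $f(x)\equiv\lambda f(y)\pmod p$ as $G(x,y)\equiv 0\pmod p$ with $G(X,Y)=X^2+aX+b-\lambda Y^2-\lambda aY-\lambda b$, and apply Lemma~\ref{lem:Red} to the (at most) five residues $1,a,b(1-\lambda),-\lambda,-a\lambda$ modulo $p$ with bounds $V_1,\dots,V_5<p$ satisfying $V_1\cdots V_5>p^4$. This produces a unit $v$ and an integer quadratic $\widetilde G(X,Y)$, whose coefficients are the smallest residues of those of $vG$, with $\widetilde G\equiv vG\pmod p$ coefficientwise. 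Then every solution of $G(x,y)\equiv 0$ satisfies $\widetilde G(x,y)\equiv 0\pmod p$ and $|\widetilde G(x,y)|\le M:=(V_1+V_4)H^2+(V_2+V_5)H+V_3$, so $\widetilde G(x,y)=pt$ for an integer $t$ with $|t|\le M/p$; there are $\ll 1+M/p$ such $t$. For each $t$ the polynomial $\widetilde G(X,Y)-pt$ is a quadratic whose $X^2$-coefficient is $\flp v\ne 0$ (hence a unit mod $p$), with no $XY$-term and with nonzero $Y^2$-coefficient $\flp{-v\lambda}$ (here $\lambda\ne 0$ is used), so it has nonzero discriminant $B^2-4AC$ and is not affinely equivalent to a parabola; its reduction modulo $p$ equals $vG\bmod p$, and the determinant of the $3\times 3$ symmetric matrix associated to $G$ equals $\tfrac14\,\lambda(1-\lambda)(a^2-4b)$, which is nonzero precisely because $\lambda\ne 0,1$ and $f$ is square-free, so $G$ is a smooth (hence irreducible) conic over $\F_p$, forcing $\widetilde G-pt$ to be irreducible over $\Q$ (by reduction modulo $p$ and Gauss's lemma, using that its $X^2$-coefficient is not divisible by $p$). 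Thus Lemma~\ref{lem:Quadratic}, applied with its parameter taken to be $\ll p^{O(1)}$ (an upper bound for both $H$ and all coefficients of $\widetilde G-pt$), bounds the number of integral $(x,y)\in[1,H]^2$ on each curve $\widetilde G-pt=0$ by $p^{o(1)}$, whence $R(\lambda)\ll(1+M/p)\,p^{o(1)}$.

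It then remains to choose the $V_i$. Taking $V_1=V_4=s$, $V_2=V_5=sH$, $V_3=\min\{sH^2,p/2\}$ and then $s$ as small as the constraint $V_1\cdots V_5>p^4$ allows, a short computation gives $M\ll p+H^{3/2}p^{3/4}$ whenever $H<p^{1/2}$, which yields the required bound on $R(\lambda)$; for $H\ge p^{1/2}$ the asserted estimate already follows from the trivial bound $N_f(\cI,\cG)\le\min\{H,T\}$. I expect the main obstacle to be the verification that each lifted quadratic $\widetilde G-pt$ meets the irreducibility and non-degeneracy hypotheses of Lemma~\ref{lem:Quadratic} uniformly in $\lambda$ and $t$ --- this is the only step that genuinely uses that $f$ is square-free rather than merely of degree $2$, through the non-vanishing of $\flp v$ modulo $p$ and of the discriminant $\lambda(1-\lambda)(a^2-4b)$ --- together with the routine but slightly delicate optimisation of the $V_i$ keeping $M$ below roughly $p+H^{3/2}p^{3/4}$.
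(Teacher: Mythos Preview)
Your proposal is correct and follows essentially the same approach as the paper: reduce to bounding, for some $\lambda\ne 0,1$, the number of solutions to $f(x)\equiv\lambda f(y)\pmod p$ in the box, scale the coefficients via Lemma~\ref{lem:Red}, lift to $O(1+H^{3/2}p^{-1/4})$ integer conics, and count points on each via Lemma~\ref{lem:Quadratic}. The only cosmetic differences are that the paper uses a direct pigeonhole on the ratios (rather than your energy inequality) and applies Lemma~\ref{lem:Red} to the four non-constant coefficients only, bounding the constant term trivially by $p/2$ --- which is exactly your Case~2 choice $V_3=p/2$ and yields the same $M\ll p+H^{3/2}p^{3/4}$.
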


\begin{proof} We follow closely  the argument of~\cite{GomShp}. 
We can assume  that 
\begin{equation}
\label{eq:const c}
H \le c p^{1/2}
\end{equation}
for some constant $c>0$ 
as otherwise the desired bound 
is weaker than the trivial estimate 
$$
N_{f}(\cI, \cG)\le \min\{H,T\} \le H^{1/2} T^{1/2}.
$$

Making the transformation $X \mapsto X+u$
we reduce the problem to the case where $\cI = \{1, \ldots, H\}$.

Let $1 \le x_1 < \ldots < x_k \le H$ be all $k=N_f(\cI, \cG)$ values of $x \in \cI$
with  $f(x) \in \cG$. 

Let $f(X)= a_0X^d + a_1X + a_2$, $a_0 \ne 0$.

Let us consider the quadric 
\begin{equation}
\label{eq:quadr Q}
\begin{split}
Q_\lambda(X,Y)& = f(X)- \lambda f(Y)=\\
& = 
a_0X^2 - \lambda a_0 Y^2 + a_1X- \lambda a_1 Y + a_2(1-\lambda). 
\end{split}
\end{equation}
One easily verifies that 
$Q_\lambda(X,Y) $ is irreducible for $\lambda\ne 1$.

We see that there are only at most $2k$ pairs $(x_i,x_j)$, $1 \le
i,j \le r$, 
for which  $f(x_i)/f(x_j) =1$. Indeed, if $x_j$ is fixed, then
$f(x_i)$ is also fixed, and thus $x_i$ can take  
at most $2$ values.

We now assume that $k \ge  4$ as otherwise there is nothing to prove. 
Therefore, 
there is $\lambda \in \cG \setminus \{1\}$  such that 
\begin{equation}
\label{eq:cong f}
f(x) \equiv \lambda f(y) \pmod p
\end{equation}
for at least 
\begin{equation}
\label{eq:pairs}
\frac{k^2 -2k}{T} \ge \frac{k^2}{2T} 
\end{equation}
pairs $(x,y)$ with $x,y \in \{1, \ldots, H\}$. 

We now apply Lemma~\ref{lem:Red} with $s = 4$,  
$$
b_1 = a_0 \quad b_2 = -\lambda a_0 ,\quad  b_3 = a_1, \quad b_4 = -\lambda a_1
$$
and
$$
V_1 = V_2 = 2p^{3/4} H^{-1/2}, \qquad V_3 = V_4 = 2p^{3/4} H^{1/2}.
$$
Thus
$$
V_1 V_2 V_3 V_4= 16 p^{3} > p^{3}.
$$
We also assume that the constant $c$  in~\eqref{eq:const c}
is small enough so the condition 
$$
V_i \le 2p^{3/4} H^{1/2} <p, \qquad i =1, \ldots, 4,
$$
is satisfied. Note that 
\begin{equation}
\label{eq:ViQ}
V_1H^2 =V_2H^2 =  V_3H = V_4H = 2p^{3/4} H^{3/2}. 
\end{equation}

Let $F(X,Y) \in \Z[X,Y]$ be the quadric 
with coefficients in the interval $[-p/2,p/2]$, 
obtained by reducing $vQ_\lambda(X,Y)$ modulo $p$.
Clearly~\eqref{eq:cong f} implies 
\begin{equation}
\label{eq:Cong FG}
F(x,y) \equiv  0 \pmod p.
\end{equation}
Furthermore,  since for $x,y\in \{1, \ldots, H\}$,
recalling~\eqref{eq:cong f}, 
we see from~\eqref{eq:ViQ} and the trivial estimate 
on the constant coefficient (that is, $|F(0,0)| \le p/2$)
that 
$$
|F(x,y)| \le 8 p^{3/4} H^{3/2} + p/2, 
$$ 
which together with~\eqref{eq:Cong FG} implies that 
\begin{equation}
\label{eq:Eq FGz}
F(x,y) - zp = 0
\end{equation}
for some integer $z \ll 1+  H^{3/2}p^{-1/4}$.

Clearly, for any integer $z$ the reducibility of  $F(X,Y)-pz$  over $\C$ implies the
reducibility of $F(X,Y)$ and then in turn of $Q_\lambda(X,Y)$ over $\F_p$, which is 
impossible  as $\lambda \ne 1$.  Hence, Lemma~\ref{lem:Quadratic} implies  
that, as $p\to \infty$, for every $z$ the equation~\eqref{eq:Eq FGz}
has  $p^{o(1)}$ solutions.
Thus the congruence~\eqref{eq:cong f} has at most
$\(1+  H^{3/2}p^{-1/4}\)p^{o(1)}$ solutions.
Together with~\eqref{eq:pairs}, this yields the inequality 
$$
\frac{k^2}{2T} \le\(1+  H^{3/2}p^{-1/4}\)p^{o(1)},
$$ 
and concludes the proof.
\end{proof}


\section*{Acknowledgements}

The author is very grateful to Domingo G\'omez-P\'erez 
for discussions and very helpful comments. 

This work was finished during a very enjoyable  research stay of the 
author at the Max Planck Institute for Mathematics, Bonn.
 
During the preparation of this paper the author was supported by the   
Australian Research Council
Grant~DP130100237.

\end{document}